\documentclass[11pt]{article}
\usepackage[T1]{fontenc}
\usepackage[utf8]{inputenc}
\usepackage{lmodern}
\usepackage{microtype}
\usepackage{amsmath,amssymb,amsthm,mathtools}
\usepackage[margin=1in]{geometry}
\usepackage[hidelinks]{hyperref}
\usepackage{enumitem}

\hypersetup{
  pdftitle={Single-Event Multinomial Full Kelly via Implicit State Positions},
  pdfsubject={Kelly criterion, multinomial betting, log-optimal portfolio selection}
}

\newtheorem{theorem}{Theorem}
\newtheorem{proposition}[theorem]{Proposition}
\newtheorem{lemma}[theorem]{Lemma}
\newtheorem{remark}[theorem]{Remark}

\title{Single-Event Multinomial Full Kelly via Implicit State Positions}
\author{Christopher D. Long}
\date{}

\begin{document}
\maketitle

\begin{abstract}
For a single event with finitely many mutually exclusive outcomes, the full Kelly problem is to maximize expected log wealth over nonnegative stakes together with an optional cash position. The optimal formula is classical, but the support-selection step is often presented via Lagrange multipliers. This note gives a shorter state-price derivation. A cash fraction $c$ acts as an implicit position in every outcome: in terminal-wealth terms, it is equivalent to a baseline stake $cq_i$ on outcome $i$, where $q_i$ is the state price. On any active support, explicit bets therefore only top up favorable outcomes from this baseline $cq_i$ to the optimal total stake $p_i$. This yields the formula $x_i = (p_i - c q_i)_+$, the threshold rule $p_i/q_i > c$, and, after sorting outcomes by $p_i/q_i$, a one-pass greedy algorithm for support selection. The result is standard in substance, but the implicit-position viewpoint gives a compact proof and a convenient way to remember the solution.
\end{abstract}

\medskip
\noindent\textbf{Keywords.} Kelly criterion; multinomial betting; state prices; log-optimal growth; horse-race wagering.

\section{Introduction}
The Kelly criterion maximizes expected logarithmic growth and occupies a central place in gambling, portfolio theory, and information theory \cite{Kelly1956,Breiman1961,Thorp1971,MacLeanThorpZiemba2011}. In the horse-race model, or more generally in a finite state market with state-contingent claims, the optimization problem is concave and completely explicit \cite{CoverThomas2006}. For a single multinomial event with mutually exclusive outcomes, explicit treatments also appear in the pari-mutuel literature, notably in Rosner's early analysis \cite{Rosner1975} and in the closed-form solution of Smoczynski and Tomkins \cite{SmoczynskiTomkins2010}. The natural question is therefore not whether the problem can be solved, but how to present the solution so that the support structure and the formula are both immediate.

This note gives a short answer. The key observation is that cash itself is a state-contingent claim. If the bettor keeps cash fraction $c$, then in state $i$ this contributes wealth $c$; equivalently, relative to outcome $i$, it is as if the bettor were already holding an implicit stake $cq_i$. Once that is recognized, the fixed-support problem collapses to the elementary full-investment Kelly rule, and the global problem becomes a monotone threshold search. The resulting algorithm is greedy: sort by the edge ratio $p_i/q_i$ and keep adding outcomes while that ratio exceeds the current cash level.

Nothing here changes the classical theory. The contribution is expository: a short state-price proof in which cash is an all-state claim, so active bets merely top up favorable outcomes from $cq_i$ to $p_i$. This isolates the idea that makes the formulas memorable and keeps the support-selection step nearly algebra-free.

\section{Model}
Consider a single event with outcomes $i=1,\dots,n$. Let
\[
  p_i>0, \qquad \sum_{i=1}^n p_i=1,
\]
be the bettor's subjective probabilities. Outcome $i$ is available at decimal odds $O_i>1$, and we write
\[
  q_i:=\frac{1}{O_i}>0
\]
for the corresponding state price or implied probability. The overround is $\sum_i q_i$, which may exceed $1$.

A strategy consists of nonnegative stakes $x_i\ge 0$ and a cash holding $c\ge 0$ satisfying
\[
  c+\sum_{i=1}^n x_i=1.
\]
If outcome $i$ occurs, terminal wealth is
\[
  W_i(c,x)=c+\frac{x_i}{q_i}.
\]
The single-period full Kelly problem is
\begin{equation}\label{eq:kelly-problem}
  \max_{c\ge 0,\,x_i\ge 0}\; G(c,x):=\sum_{i=1}^n p_i\log\!\left(c+\frac{x_i}{q_i}\right)
  \quad\text{subject to}\quad c+\sum_{i=1}^n x_i=1.
\end{equation}

It is convenient to define the \emph{edge ratios}
\[
  r_i:=\frac{p_i}{q_i}.
\]
These compare the bettor's probabilities to the market-implied probabilities. They will also be the optimal terminal wealth levels on active outcomes.

Although the terminal wealth vector is unique, the pair $(c,x)$ need not be unique in degenerate tie cases. The canonical strategy constructed below is unique whenever no edge ratio equals the final cash cutoff.

\section{A weighted full-investment lemma}
The only optimization fact needed is the following standard weighted AM-GM/Jensen lemma.

\begin{lemma}\label{lem:full-investment}
Let $a_i>0$ with $A:=\sum_{i=1}^m a_i$, and let $S>0$. Then
\[
  \sum_{i=1}^m a_i\log z_i
\]
is uniquely maximized over $z_i>0$ subject to $\sum_{i=1}^m z_i=S$ at
\[
  z_i=\frac{S}{A}a_i \qquad (i=1,\dots,m).
\]
\end{lemma}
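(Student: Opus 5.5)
The plan is to reduce the statement to Jensen's inequality for the strictly concave logarithm, by comparing an arbitrary feasible point with the candidate $z^*_i:=\frac{S}{A}a_i$. First I check feasibility: each $z^*_i>0$ because $a_i>0$ and $S,A>0$, and $\sum_i z^*_i = \frac{S}{A}\sum_i a_i = S$.

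Now fix any feasible $z$ with $z_i>0$ and $\sum_i z_i=S$. I will write the difference of objectives as a weighted average of logarithms of ratios:
\[
\sum_{i=1}^m a_i\log z_i-\sum_{i=1}^m a_i\log z^*_i
= A\sum_{i=1}^m \frac{a_i}{A}\log\frac{z_i}{z^*_i}.
\]
The weights $a_i/A$ are positive and sum to $1$. Since $\log$ is concave, Jensen's inequality (equivalently, weighted AM--GM) gives
\[
\sum_{i=1}^m \frac{a_i}{A}\log\frac{z_i}{z^*_i}\le \log\sum_{i=1}^m \frac{a_i}{A}\cdot\frac{z_i}{z^*_i}.
\]
Substituting $z^*_i=\frac{S}{A}a_i$, the argument on the right becomes $\sum_i \frac{a_i}{A}\cdot\frac{A z_i}{S a_i}=\frac{1}{S}\sum_i z_i=1$. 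Hence the right-hand side is $\log 1=0$, and the objective at $z$ is at most its value at $z^*$.

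For uniqueness I will use strict concavity of $\log$. Because every weight $a_i/A$ is strictly positive, equality in Jensen holds only when all the ratios $z_i/z^*_i$ are equal to a common value $t$. Summing $z_i=t z^*_i$ over $i$ gives $S=tS$, so $t=1$ and $z=z^*$. The only point needing care is exactly this: the equality case of Jensen relies on all weights being strictly positive, and that is where the hypothesis $a_i>0$ is used. An alternative route is strict concavity of the objective on the convex feasible set together with the Lagrange condition $a_i/z_i=\lambda$, which also gives $z_i\propto a_i$. The Jensen argument is shorter and does not require a separate existence argument, so I prefer it.
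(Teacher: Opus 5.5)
Your proof is correct and is essentially the paper's argument: the paper applies Jensen with weights $a_i/A$ to the ratios $z_i/a_i$, bounding the average by $\log(S/A)$, whereas you apply it to $z_i/z^*_i$ and bound it by $0$, which differs only by the constant shift $\log(S/A)$. The uniqueness step, where equal ratios plus the sum constraint force $z=z^*$, is the same in both.
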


\begin{proof}
Write
\[
  \sum_{i=1}^m a_i\log z_i-\sum_{i=1}^m a_i\log a_i
  =A\sum_{i=1}^m \frac{a_i}{A}\log\!\left(\frac{z_i}{a_i}\right).
\]
By concavity of $\log$,
\[
  \sum_{i=1}^m \frac{a_i}{A}\log\!\left(\frac{z_i}{a_i}\right)
  \le \log\!\left(\sum_{i=1}^m \frac{a_i}{A}\frac{z_i}{a_i}\right)
  =\log\!\left(\frac{1}{A}\sum_{i=1}^m z_i\right)
  =\log\!\left(\frac{S}{A}\right).
\]
Equality holds only when $z_i/a_i$ is constant in $i$, and the constraint $\sum_i z_i=S$ forces that constant to be $S/A$.
\end{proof}

\section{Optimization on a fixed support}
Let $A\subseteq\{1,\dots,n\}$ be nonempty and proper. Write
\[
  P_A:=\sum_{i\in A} p_i, \qquad Q_A:=\sum_{i\in A} q_i.
\]
We ask for the best strategy whose explicit bets are confined to $A$, so that $x_j=0$ for $j\notin A$.

\begin{proposition}[Fixed-support optimizer]\label{prop:fixed-support}
Let $A\subsetneq\{1,\dots,n\}$ be nonempty and assume $Q_A<1$. Define
\begin{equation}\label{eq:cash-A}
  c_A:=\frac{1-P_A}{1-Q_A}.
\end{equation}
If
\begin{equation}\label{eq:positivity-A}
  p_i>c_A q_i \qquad (i\in A),
\end{equation}
then among all strategies with $x_j=0$ for $j\notin A$, the unique maximizer of \eqref{eq:kelly-problem} is
\begin{equation}\label{eq:fixed-support-solution}
  c=c_A,\qquad x_i=p_i-c_Aq_i \ (i\in A),\qquad x_j=0 \ (j\notin A).
\end{equation}
Its terminal wealth is
\begin{equation}\label{eq:wealth-levels-fixed-support}
  W_i=\frac{p_i}{q_i}=r_i \quad (i\in A),
  \qquad
  W_j=c_A \quad (j\notin A).
\end{equation}
\end{proposition}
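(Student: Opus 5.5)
The plan is to change variables to terminal wealth, treating cash as an implicit position in every outcome, so that the fixed-support problem becomes an instance of Lemma~\ref{lem:full-investment}. Fix a strategy with $x_j=0$ for $j\notin A$. For $i\in A$ we have $W_i=c+x_i/q_i$, so the total stake on outcome $i$ is $q_iW_i=cq_i+x_i$. For $j\notin A$ we have $W_j=c$. The budget constraint $c+\sum_{i\in A}x_i=1$ then becomes
\[
  c(1-Q_A)+\sum_{i\in A} q_iW_i=1 .
\]
Set $z_i:=q_iW_i$ for $i\in A$ and $z_0:=c(1-Q_A)$; this is where the hypothesis $Q_A<1$ is used. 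The objective becomes
\[
  G=\sum_{i\in A}p_i\log(z_i/q_i)+(1-P_A)\log\!\big(z_0/(1-Q_A)\big),
\]
which is, up to an additive constant, $\sum_{i\in A}p_i\log z_i+(1-P_A)\log z_0$, subject to $z_0+\sum_{i\in A}z_i=1$.

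Next, I would relax the nonnegativity constraints. The original feasible set is $\{c\ge0,\ x_i\ge0\}$ together with the budget constraint. In the new variables it corresponds to $z_0\ge0$ and $z_i\ge z_0q_i/(1-Q_A)$. I would first maximize over the larger set where we only require $z_0>0$ and $z_i>0$. Points with $z_0=0$ or some $z_i=0$ give $G=-\infty$, because all $p$'s are positive and $1-P_A>0$ since $A$ is proper. The weights $p_i$ for $i\in A$ and $1-P_A$ are all positive and sum to $1$, so Lemma~\ref{lem:full-investment} with total $S=1$ gives the unique maximizer $z_i=p_i$ and $z_0=1-P_A$. This yields $c=c_A$, $W_i=p_i/q_i=r_i$ for $i\in A$, and $W_j=c_A$ for $j\notin A$, which is \eqref{eq:wealth-levels-fixed-support}. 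Translating back, $x_i=q_iW_i-cq_i=p_i-c_Aq_i$.

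The remaining step is feasibility: I must check that this relaxed maximizer lies in the original constraint set. We have $c_A>0$ since $P_A<1$ and $Q_A<1$. Hypothesis \eqref{eq:positivity-A} gives $x_i>0$. The budget constraint holds by construction, and one can confirm directly that $c_A+P_A-c_AQ_A=1$. Since the maximizer of the relaxed problem is feasible for the constrained one, it is the unique maximizer there too.

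The main obstacle is only bookkeeping: the map $(c,x_A)\mapsto(z_0,z_A)$ must be a linear bijection between the budget hyperplanes, so that uniqueness transfers. It is, with inverse $c=z_0/(1-Q_A)$ and $x_i=z_i-cq_i$.
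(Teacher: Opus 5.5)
Your proof is correct, and it differs from the paper's in how the cash variable is handled.

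The paper introduces the same effective stakes $y_i=x_i+cq_i$, but it optimizes in two stages. First, for each fixed $c\in(0,1)$, it applies Lemma~\ref{lem:full-investment} to the $|A|$ weights $p_i$ with total $1-c(1-Q_A)$. Then it maximizes the resulting strictly concave value function $\Phi_A(c)=C_A+P_A\log\bigl(1-c(1-Q_A)\bigr)+(1-P_A)\log c$ by solving $\Phi_A'(c)=0$.

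You instead absorb cash into the lemma as one extra coordinate $z_0=c(1-Q_A)$ with weight $1-P_A$. The fixed-support problem then becomes a full-investment Kelly problem on $|A|+1$ lumped states, whose prices $q_i$ $(i\in A)$ and $1-Q_A$ sum to one. A single application of the lemma gives $z=p$. This buys several things:
\begin{enumerate}[leftmargin=2em]
  \item It removes the calculus step.
  \item Uniqueness comes entirely from the lemma, transported by your linear bijection.
  \item It covers the endpoints $c=0$ and $c=1$, which the paper's restriction to $c\in(0,1)$ leaves implicit.
  \item It shows $c_A=(1-P_A)/(1-Q_A)$ to be the edge ratio of the aggregated off-support state, in parallel with $W_i=p_i/q_i$ on $A$.
\end{enumerate}

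In exchange, the paper's two-stage version produces the value function $\Phi_A(c)$ explicitly. The paper reuses it in the remark after the proposition to show that supports with $Q_A\ge 1$ collapse. Your substitution does not extend to that case: $z_0=c(1-Q_A)$ is then nonpositive, and the change of variables degenerates when $Q_A=1$.
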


\begin{proof}
Fix $c\in(0,1)$. For $i\in A$, define the \emph{effective total stake}
\[
  y_i:=x_i+cq_i.
\]
This is natural because $cq_i$ staked on outcome $i$ would pay exactly $c$ in state $i$; cash is therefore an implicit position in every outcome. For $i\in A$,
\[
  W_i=c+\frac{x_i}{q_i}=\frac{y_i}{q_i},
\]
while for $j\notin A$ we have $W_j=c$.

Since $x_j=0$ for $j\notin A$ and $c+\sum_i x_i=1$,
\[
  \sum_{i\in A} y_i
  =\sum_{i\in A} x_i+cQ_A
  =1-c+cQ_A
  =1-c(1-Q_A).
\]
Therefore, for fixed $c$, maximizing expected log wealth is equivalent to maximizing
\[
  \sum_{i\in A} p_i\log y_i
\]
over $y_i>0$ with
\[
  \sum_{i\in A} y_i=1-c(1-Q_A),
\]
since the terms $-\sum_{i\in A} p_i\log q_i +(1-P_A)\log c$ are constant. By Lemma \ref{lem:full-investment}, the unique maximizer for this fixed $c$ is
\[
  y_i=\frac{1-c(1-Q_A)}{P_A}p_i \qquad (i\in A).
\]
Substituting back yields the value function
\[
  \Phi_A(c)=C_A+P_A\log\!\bigl(1-c(1-Q_A)\bigr)+(1-P_A)\log c,
\]
where $C_A$ is constant in $c$. Since $\Phi_A$ is strictly concave on $(0,1)$, its unique maximizer is characterized by
\[
  \Phi_A'(c)= -\frac{P_A(1-Q_A)}{1-c(1-Q_A)}+\frac{1-P_A}{c}=0,
\]
which gives \eqref{eq:cash-A}. At $c=c_A$ one has
\[
  1-c_A(1-Q_A)=P_A,
\]
so the optimal effective stakes are simply $y_i=p_i$, hence $x_i=p_i-c_Aq_i$. Condition \eqref{eq:positivity-A} guarantees that these explicit stakes are strictly positive on $A$. Formula \eqref{eq:wealth-levels-fixed-support} follows immediately.
\end{proof}

The proposition explains the formula: if cash level $c_A$ is held back, then the bettor already has an implicit stake $c_Aq_i$ in every outcome. On the active support, the total stake should therefore equal $p_i$, so the additional explicit stake is $p_i-c_Aq_i$.

If $A$ is proper but $Q_A\ge 1$, then the same value function $\Phi_A(c)$ is strictly increasing on $(0,1)$, so no optimizer confined to $A$ can keep every state in $A$ active; the optimum necessarily collapses to a smaller support. Thus any genuinely active proper support must satisfy $Q_A<1$.

\section{Greedy support selection}
Sort the outcomes so that
\[
  r_1\ge r_2\ge \cdots \ge r_n.
\]
For $k=0,1,\dots,n$, let
\[
  A_k:=\{1,\dots,k\},
  \qquad P_k:=\sum_{i=1}^k p_i,
  \qquad Q_k:=\sum_{i=1}^k q_i,
\]
with the convention $P_0=Q_0=0$ and $c_0=1$. Whenever $k<n$ and $Q_k<1$, define
\[
  c_k:=\frac{1-P_k}{1-Q_k}.
\]
For notational convenience set $c_n:=0$.

\begin{theorem}[Greedy characterization of the canonical Kelly strategy]\label{thm:greedy}
Start from $A_0=\varnothing$ and $c_0=1$. At step $k<n$, compare $r_{k+1}$ with $c_k$.
\begin{itemize}[leftmargin=2em]
  \item If $r_{k+1}>c_k$, add outcome $k+1$ to the support and update cash to
  \[
    c_{k+1}=\frac{1-P_{k+1}}{1-Q_{k+1}}.
  \]
  \item If $r_{k+1}\le c_k$, stop.
\end{itemize}
Let $k^*$ be the first index at which the algorithm stops, with the convention $k^*=n$ if no stop occurs earlier. Then a canonical optimal strategy is
\begin{equation}\label{eq:global-solution}
  c^*=c_{k^*},
  \qquad
  x_i^*=\bigl(p_i-c^*q_i\bigr)_+ \qquad (i=1,\dots,n),
\end{equation}
and the unique optimal terminal wealth vector is
\begin{equation}\label{eq:wealth-floor}
  W_i^*=\max\!\left\{c^*,\frac{p_i}{q_i}\right\}.
\end{equation}
In particular, the active outcomes form a prefix after sorting by $p_i/q_i$. If $r_i\ne c^*$ for every $i$, then the optimal strategy itself is unique.
\end{theorem}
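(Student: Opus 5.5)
I plan to prove optimality by checking that the candidate strategy satisfies a global optimality certificate, since Proposition \ref{prop:fixed-support} only gives optimality relative to a fixed support. The problem \eqref{eq:kelly-problem} is concave, so it is enough to produce a feasible point at which no feasible direction increases $G$. Write $W^*$ for the candidate wealth. The directional derivatives of $G$ are $\partial G/\partial c=\sum_i p_i/W_i$ and $\partial G/\partial x_i=p_i/(q_iW_i)=r_i/W_i$. With a multiplier $\lambda$ for the budget, the KKT conditions read: $r_i/W_i\le\lambda$ with equality when $x_i>0$, and $\sum_i p_i/W_i\le\lambda$ with equality when $c>0$. I will verify these with $\lambda=1$.

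\textbf{Step 1: monotonicity of the greedy recursion.} While the algorithm adds outcomes, I will show that $c_{k+1}<c_k$, that $c_{k+1}<r_{k+1}$, and that $Q_{k+1}<1$. The tool is the mediant identity
\[
c_{k+1}=\frac{(1-P_k)-p_{k+1}}{(1-Q_k)-q_{k+1}}.
\]
If $Q_{k+1}<1$, then $c_{k+1}$ is obtained from $c_k=(1-P_k)/(1-Q_k)$ by removing a part whose ratio is $r_{k+1}>c_k$. Removing a piece with ratio above the average lowers the ratio, so $c_{k+1}<c_k<r_{k+1}$.

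The main obstacle is ruling out $Q_{k+1}\ge1$ with $k+1<n$. In that case $\sum_{i>k+1}q_i\le 1-Q_{k+1}\le 0$ would have to hold. But $\sum_{i>k+1}q_i>0$ because every $q_i>0$, so the remaining $q$-mass is positive. Together with $p$-mass $1-P_{k+1}>0$, this gives $1-Q_{k+1}>0$ whenever $k+1<n$. The convention $c_n=0$ covers the full-support case. Using the sorting and the invariant $c^*<r_i$ for $i\le k^*$, I then get $r_i\le c^*$ for $i>k^*$ (from the stopping rule and the ordering) and $r_i>c^*$ for $i\le k^*$. Hence $x_i^*=(p_i-c^*q_i)_+$ matches \eqref{eq:fixed-support-solution}, and the wealth is $W_i^*=\max\{c^*,r_i\}$ as in \eqref{eq:wealth-floor}. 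The prefix structure follows.

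\textbf{Step 2: KKT verification.} Feasibility is $c^*+\sum_{i\le k^*}(p_i-c^*q_i)=c^*(1-Q_{k^*})+P_{k^*}=1$, by the definition of $c_{k^*}$.

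For $i\le k^*$, $r_i/W_i=1$. For $i>k^*$, $r_i/W_i=r_i/c^*\le1$.

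For cash, $\sum_i p_i/W_i=\sum_{i\le k^*}q_i+(1-P_{k^*})/c^*=Q_{k^*}+1-Q_{k^*}=1$. When $k^*=n$ we have $c^*=0$, and the sum is $\sum_i q_i$, which must be $\le1$. I expect this to follow from the final greedy step, because $r_n>c_{n-1}$ forces $Q_n\le1$ by the mediant argument.

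Since $G$ is concave and every feasible direction $d$ has $\sum_j d_j=0$, these inequalities make the first-order change nonpositive in every feasible direction, so the candidate is globally optimal.

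\textbf{Step 3: uniqueness.} $\log$ is strictly concave in $W$ and every $p_i>0$, so the optimal wealth vector is unique. If no $r_i=c^*$, the wealth vector determines the strategy. For $c^*>0$, the wealth gives $c$ as $\min_i W_i$, attained only at inactive outcomes, and then $x_i=q_i(W_i-c)$. The degenerate case $c^*=0$ is handled the same way, from $W_i=x_i/q_i$.
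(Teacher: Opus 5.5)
Your certificate approach is sound and takes a different route from the paper, but the argument you give for $Q_{k+1}<1$ when $k+1<n$ is wrong. The inequality $\sum_{i>k+1}q_i\le 1-Q_{k+1}$ is the same as $\sum_{i=1}^n q_i\le 1$, and the model does not assume this (the overround may exceed $1$). Your argument also never uses the acceptance test, so it would ``prove'' $Q_{k+1}<1$ for every proper prefix. That is false: $q=(0.6,0.6,0.3)$ gives $Q_2=1.2$ even though $q_3>0$. What keeps the greedy away from such prefixes is the acceptance test itself. Inductively $Q_k<1$, so $c_k>0$ for $k<n$, and then $r_{k+1}>c_k$ gives
\[
  q_{k+1}<\frac{p_{k+1}}{c_k}\le\frac{1-P_k}{c_k}=1-Q_k .
\]
This is the computation you already invoke for the last step, where it yields $Q_n<1$ strictly. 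It is also the first line of the paper's proof. Use it at every step.

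Your uniqueness step skips the place where the no-tie hypothesis is used. Any strategy with wealth $W^*$ has $x_i=q_i(W_i^*-c)$, and the budget then gives $(c-c^*)\bigl(1-\sum_i q_i\bigr)=0$. So $W^*$ alone does not fix $c$ when $\sum_i q_i=1$, because cash is then replicable. Writing $c=\min_i W_i$ presupposes that every optimum leaves some outcome unbet. Your own certificate supplies this. For every feasible $z=(c,x)$, concavity gives $G(z)\le G(z^*)-\sum_{i>k^*}(1-r_i/c^*)x_i$ if $k^*<n$, and $G(z)\le G(z^*)-(1-Q_n)c$ if $k^*=n$. With no ties, and with $Q_n<1$ strict, this forces every optimum to have $x_i=0$ off the prefix (respectively $c=0$). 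After that, $W^*$ determines everything.

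With those repairs your proof is complete, and it is a genuinely different route. The paper compares supports through Proposition~\ref{prop:fixed-support}: accepted additions lower cash, no enlargement of the stopping prefix can meet the positivity condition, and every shorter prefix is beaten by the next one. That keeps the implicit-position picture ($cq_i$ topped up to $p_i$) in view and avoids multipliers, which is the note's expository aim. However, it only explicitly compares prefixes and supersets of the stopping prefix. Your first-order certificate with $\lambda=1$ tests the candidate against all feasible points at once. It therefore needs no fixed-support optimization and no separate argument that optimal supports are prefixes, and the same inequality gives uniqueness. The price is that it verifies the formula rather than deriving it, and it reintroduces the Lagrangian viewpoint the paper deliberately sidesteps.
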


\begin{proof}
Suppose first that $k<n$ and $r_{k+1}>c_k$. Then
\[
  q_{k+1}<\frac{p_{k+1}}{c_k}\le \frac{1-P_k}{c_k}=1-Q_k,
\]
so $Q_{k+1}<1$ and $c_{k+1}$ is well defined. Moreover,
\[
  c_{k+1}<c_k
  \iff \frac{1-P_{k+1}}{1-Q_{k+1}}<\frac{1-P_k}{1-Q_k}
  \iff p_{k+1}>c_kq_{k+1}
  \iff r_{k+1}>c_k.
\]
Thus every accepted addition strictly lowers cash. Since the $r_i$ are decreasing,
\[
  r_i\ge r_{k+1}>c_k>c_{k+1} \qquad (1\le i\le k+1),
\]
so Proposition \ref{prop:fixed-support} applies to $A_{k+1}$. In particular, once an outcome becomes active, it never drops out.

Now suppose the algorithm stops at $k=k^*<n$, so $r_{k+1}\le c_k$. Consider any strict enlargement $B=A_k\cup S$, where $S\subseteq\{k+1,\dots,n\}$ is nonempty. Write
\[
  P_S:=\sum_{j\in S} p_j,
  \qquad
  Q_S:=\sum_{j\in S} q_j.
\]
Because $r_j\le r_{k+1}\le c_k$ for every $j\in S$,
\begin{equation}\label{eq:PS-bound}
  P_S\le c_kQ_S.
\end{equation}
If $Q_B<1$, the cash level attached to support $B$ is
\[
  c_B:=\frac{1-P_k-P_S}{1-Q_k-Q_S},
\]
and a one-line calculation gives
\[
  c_B<c_k \iff P_S>c_kQ_S.
\]
By \eqref{eq:PS-bound} this cannot happen, so $c_B\ge c_k$. Hence every $j\in S$ satisfies
\[
  r_j\le c_k\le c_B,
\]
which contradicts the strict positivity condition $r_j>c_B$ required by Proposition \ref{prop:fixed-support} for an optimizer with support confined to $B$. Therefore no strict enlargement of $A_k$ with $Q_B<1$ can be optimal. If $Q_B\ge 1$, the preceding remark after Proposition \ref{prop:fixed-support} shows that an optimizer confined to $B$ cannot keep all states in $B$ active, so its actual active support is smaller and has already been ruled out.

Any smaller prefix $A_m$ with $m<k$ is also suboptimal. Indeed, the algorithm did not stop at $m$, so $r_{m+1}>c_m$. By the first part of the proof, Proposition \ref{prop:fixed-support} applies to $A_{m+1}$ and produces a distinct optimizer among strategies confined to $A_{m+1}$. Since the optimizer on $A_m$ is feasible for that larger class but not optimal there, $A_m$ cannot be globally optimal.

Thus the canonical optimizer is attained at the stopping prefix $A_k$. Formula \eqref{eq:global-solution} is exactly the fixed-support formula there, extended by zeros off the active set, and \eqref{eq:wealth-floor} is just a restatement of the active and inactive wealth levels.

If the algorithm never stops before $n$, then every prefix extension is accepted and the canonical choice is $c^*=0$, $x_i^*=p_i$ for all $i$, which again yields \eqref{eq:wealth-floor}. Finally, strict concavity in the terminal wealth vector gives uniqueness of $W^*$, and the strategy is unique whenever no ratio $r_i$ lies exactly at the cutoff $c^*$.
\end{proof}

\begin{remark}[Binary reduction]
For a binary event, if only outcome $1$ is active then
\[
  c^*=\frac{1-p_1}{1-q_1},
  \qquad
  x_1^*=p_1-c^*q_1=\frac{p_1-q_1}{1-q_1},
\]
which is the classical one-bet Kelly fraction written in state-price form.
\end{remark}

\begin{remark}[Interpretation]
The optimal terminal wealth has the clipped form
\[
  W_i^*=\max\{c^*,r_i\}.
\]
On outcomes with sufficiently large edge ratio $p_i/q_i$, the bettor raises wealth from the cash floor $c^*$ up to $p_i/q_i$; all other outcomes are left at the floor. The cutoff $c^*$ is therefore the unique level at which the set of favorable states is separated from the rest.
\end{remark}

\section{Concluding comment}
For a single multinomial event, the full Kelly solution is already known and completely explicit. What is useful in practice is a derivation that keeps the intuition visible. The implicit-position viewpoint does exactly that: cash is an all-state claim, so active bets only need to top up each favorable outcome from $cq_i$ to $p_i$. Once written this way, the threshold rule and greedy support selection become immediate.

\end{document}